\newtheorem{lemma}{Lemma}
\newtheorem{theorem}{Theorem}
\newtheorem*{definition*}{Definition}
\newtheorem{assumption}{Assumption}
\newtheorem{remark}{Remark}
\title{Multiplicative Updates for Polynomial Root Finding}
\date{}
\author{Nicolas Gillis  \\ 
Department of Mathematics and Operational Research \\ 
Facult\'e Polytechnique, Universit\'e de Mons \\ 
Rue de Houdain 9, 7000 Mons, Belgium\\
 nicolas.gillis@umons.ac.be  
}
\begin{document}

\maketitle

\begin{abstract}
Let $f(x)=p(x)-q(x)$ be a polynomial with real coefficients whose roots have nonnegative real part, 
where $p$ and $q$ are polynomials with nonnegative coefficients. In this paper, we prove the following: Given an initial point $x_0 > 0$, the multiplicative update  $x_{t+1} = x_t \, p(x_t)/q(x_t)$ ($t=0,1,\dots$) monotonically and linearly converges to the largest  (resp.\@ smallest) real roots of $f$ smaller (resp.\@ larger) than $x_0$ if $p(x_0) < q(x_0)$ (resp.\@  $q(x_0) < p(x_0)$). The motivation to study this algorithm comes from the multiplicative updates proposed in the literature to solve optimization problems with nonnegativity constraints; in particular many variants of nonnegative matrix factorization. 
\end{abstract}

\textbf{Keywords.} polynomial root finding, multiplicative updates.

\section{Introduction}

Let $f(x) = p(x) - q(x)$ be a polynomial where $p$ and $q$ have nonnegative coefficients. 
We would like to compute a root of $f$, that is, find $x$ such that $f(x) = 0 \iff p(x) = q(x)$.  
Let $x_0 \in \mathbb{R}$ with $x_0 > 0$ (the same idea can be used if $x_0$ is negative), 
and let us denote $r_1$ (resp.\@ $r_m$) the smallest (resp.\@ largest) nonnegative real root of $f$. 
Let us also define 
\[
\underline{r} = \left\{ 
\begin{array}{cc} 
\text{the largest real root of $f$ smaller than $x_0$} & \text{if } x_0 \geq r_1,  \\ 
0 & \text{otherwise,} 
\end{array} \right. 
\] 
and 
\[
\bar{r} = \left\{ 
\begin{array}{cc} 
\text{the smallest real root of $f$ larger than $x_0$} & \text{if } x_0 \leq r_m,  \\ 
+\infty & \text{otherwise, } 
\end{array} \right.  
\]  
such that $x_0 \in [\underline{r},\bar{r}]$. Note that if $x_0$ is equal to a root of $f$, then $x_0=\underline{r}=\bar{r}$.  
The point $x_0$ is a root of $f$ if and only if $p(x_0) = q(x_0)$, otherwise one may apply the multiplicative updates 
$x_0 \frac{p(x_0)}{q(x_0)}$ and $x_0 \frac{q(x_0)}{p(x_0)}$ to generate new points that hopefully get closer to roots of $f$. The intuition is that the roots of $f$ are fixed points of these updates.   
Suppose without loss of generality that $p(x_0) > q(x_0)$. Then, we have 
\begin{displaymath}
x_{-1} = x_0 \frac{q(x_0)}{p(x_0)} \;  < \; x_0 \; < \; x_1 = x_0 \frac{p(x_0)}{q(x_0)}.
\end{displaymath}
Two points have been generated: $x_1$ greater than $x_0$ and the $x_{-1}$ smaller than $x_0$. 
In this paper, we will prove that, under some assumptions, 
$x_1$ and $x_{-1}$ belong to the same interval as $x_0$, that is, 
\[ 
\underline{r} \;\leq\; x_{-1} \;<\; x_{0} \;<\; x_{1} \;\leq\; \bar{r}, 
\] 
so that applying the above multiplicative updates iteratively generates two sequences converging monotonically to $\underline{r}$ and $\bar{r}$ (Theorems~\ref{MainTh} and~\ref{convr}). We will also prove that this algorithm has local linear convergence for simple roots (Theorem~\ref{thconv}). \\ 

The motivation to study the above updates comes from the paper~\cite{sha2007multiplicative} where such multiplicative updates are used to solve quadratic programs with nonnegativity constraints, 
and from the literature on nonnegative matrix factorization (NMF) where such updates are used extensively to find solutions of the first-order optimality conditions; 
see for example~\cite{cichocki2009nonnegative} and the references therein.  
The popularity of these multiplicative updates in the NMF literature comes from the fact that 
(1)~they were the algorithm proposed in~\cite{lee1999learning, lee2001algorithms} that launched the research on NMF, 
(2)~they are rather simple to derive and implement, and 
(3)~there is no parameter to tune.   
However, they usually converge slower than more sophisticated techniques such as coordinate descent methods; see, e.g.,~\cite{G14a}.  

The goal and main contribution of this paper is to get more insight on such multiplicative updates by proving their convergence for univariate polynomials. It is organized as follows. 
Section~\ref{not} gives the assumptions and the notation used throughout the paper, 
Section~\ref{mainres} proves the convergence of the multiplicative updates as outlined above, 
and Section~\ref{examp} provides a numerical example.

\section{Assumptions and Notation} \label{not}

Let us write the polynomial $f$ of degree $n$ as follows
\begin{equation*}
f(x) = \sum_{i=0}^n (-1)^i a_{n-i} x^{n-i}, \quad \text{ where } 
a_{n-i} \in \mathbb{R} \text{ for }  1 \leq i \leq n, 
 \end{equation*} 
and where we assume $a_n = 1$ without loss of generality. 

\begin{assumption}
\label{as1}
The real parts of the roots of $f$ are nonnegative, and $f$ has at least one root with positive real part. 
\end{assumption}
If Assumption~\ref{as1} is not satisfied, 
one can shift the polynomial, that is, $f(x) \leftarrow f(x-x_0)$ for some real $x_0$ sufficiently large.  
The polynomial $f$ can be split as the difference of two polynomials with nonnegative coefficients as follows:
\begin{equation} \label{unipoly} 
f(x) = \sum_{i=0}^n (-1)^i a_{n-i} x^{n-i} = p(x) - q(x),  
\end{equation}  
where 
\begin{equation} \label{pxnx}   
p(x)  =  \sum_{i=0}^{\left\lceil (n-1)/2 \right\rceil} a_{n-2i} \, x^{n-2i} 
\quad \text{ and } \quad 
q(x)  =  \sum_{i=0}^{\left\lfloor (n-1)/2 \right\rfloor} a_{n-(2i+1)} \, x^{n-(2i+1)} . 
\end{equation} 
Defining $a_{j} = 0$ for all $j \notin \{0,1,\dots,n\}$, we also have 
\[
q(x)  =  \sum_{i=0}^{\left\lceil (n-1)/2 \right\rceil} a_{n-(2i+1)} \, x^{n-(2i+1)}, 
\]  
so that $q$ and $p$ sum over the same indices, which will be useful later.  
Let us denote 
\begin{equation*}
r_1 \leq r_2 \leq \dots \leq r_m
\label{roots}
\end{equation*}
the real roots of $f$ in nondecreasing order. 
Let us also denote $r_0=0$, $r_{m+1} = +\infty$ and $r_i$ the complex roots of $f$ for $m+2 \leq i \leq n+1$,  
and $I = \{1,2,\dots,m,m+2,\dots,n+1\}$ the indices of the roots of $f$. 
Therefore, we have $f(x) = \prod_{i \in I} (x-r_i)$,  $a_n = 1$, and 
\begin{equation}  \label{anj}
a_{n-j} = 
\sum_{J \subset I, |J|=j} \; \prod_{i \in J} r_i 
\quad \text{ for } 1 \leq j \leq n. 
\end{equation} 
Under Assumption~\ref{as1}, the coefficients of a polynomial $f$ are alternating, 
that is, 
$a_i \geq 0$ for $0 \leq i \leq n$, since Re$(r_i) \geq 0$ for all $i \in I$. 
In fact, all real roots are nonnegative while, for the complex roots, 
we have the following result. 
\begin{lemma}
Let $\mathcal{Z} = \cup_{i=1}^k \{ z_i, \bar{z}_i \}$ be a set of $k$ complex numbers and their conjugates with nonnegative real parts. Then, for any $1 \leq j \leq 2k$, 
\[
f(\mathcal{Z},j) = \sum_{J \subset \mathcal{Z}, |J|=j} \; \prod_{z_i \in J} z_i 
\quad 
\text { is a nonnegative real number.}
\]  
\end{lemma}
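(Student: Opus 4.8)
The plan is to recognize $f(\mathcal{Z},j)$ as the $j$-th elementary symmetric polynomial in the $2k$ numbers $z_1,\bar z_1,\dots,z_k,\bar z_k$, and then to read off its sign directly from the coefficients of the monic polynomial whose roots are exactly these $2k$ numbers. First I would invoke Vieta's formulas to write
\[
\prod_{i=1}^k (x-z_i)(x-\bar z_i) \;=\; \sum_{j=0}^{2k} (-1)^j \, f(\mathcal{Z},j)\, x^{2k-j},
\]
so that $(-1)^j f(\mathcal{Z},j)$ is the coefficient of $x^{2k-j}$ on the left. That $f(\mathcal{Z},j)$ is a real number is then immediate: $\mathcal{Z}$ is closed under complex conjugation, so the polynomial on the left has real coefficients.

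For the nonnegativity, I would group each conjugate pair as $(x-z_i)(x-\bar z_i) = x^2 - 2\,\mathrm{Re}(z_i)\,x + |z_i|^2$ and perform the substitution $x \mapsto -x$. Since $(-1)^{2k}=1$, this turns $\sum_{j} (-1)^j f(\mathcal{Z},j) x^{2k-j}$ into $\sum_{j} f(\mathcal{Z},j) x^{2k-j}$, while it turns the left-hand side into
\[
\prod_{i=1}^k \bigl( x^2 + 2\,\mathrm{Re}(z_i)\,x + |z_i|^2 \bigr).
\]
Because $\mathrm{Re}(z_i) \geq 0$, each quadratic factor has nonnegative coefficients, hence so does their product. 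Matching the coefficient of $x^{2k-j}$ on the two sides yields $f(\mathcal{Z},j) \geq 0$ for every $1 \leq j \leq 2k$, which is the claim.

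I do not expect a genuine obstacle here; the only thing to be careful about is the sign bookkeeping in the substitution $x \mapsto -x$ (keeping $(-1)^j$ and $(-1)^{2k-j}$ straight). An essentially equivalent alternative, which I would mention as a remark, is induction on $k$: writing $\mathcal{Z}' = \mathcal{Z} \setminus \{z_{k+1},\bar z_{k+1}\}$ and multiplying $\prod_{i=1}^k(x-z_i)(x-\bar z_i)$ by $x^2 - 2\,\mathrm{Re}(z_{k+1})\,x + |z_{k+1}|^2$ gives the recursion
\[
f(\mathcal{Z},j) \;=\; f(\mathcal{Z}',j) \;+\; 2\,\mathrm{Re}(z_{k+1})\, f(\mathcal{Z}',j-1) \;+\; |z_{k+1}|^2\, f(\mathcal{Z}',j-2),
\]
with the convention $f(\mathcal{Z}',l)=0$ for $l<0$ or $l>2k-2$, so nonnegativity is preserved at each step from the base case $f(\{z_1,\bar z_1\},1) = 2\,\mathrm{Re}(z_1)$, $f(\{z_1,\bar z_1\},2) = |z_1|^2$. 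This lemma is then exactly what is needed to conclude, via~\eqref{anj}, that the coefficients $a_{n-j}$ of $f$ are nonnegative under Assumption~\ref{as1}.
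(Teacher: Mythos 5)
Your proposal is correct, and your primary argument takes a genuinely different route from the paper. You identify $f(\mathcal{Z},j)$ with the $j$-th elementary symmetric function of the $2k$ numbers via Vieta and then substitute $x \mapsto -x$, so that both realness and nonnegativity of every $f(\mathcal{Z},j)$ are read off at once from the fact that $\prod_{i=1}^k\bigl(x^2+2\,\mathrm{Re}(z_i)\,x+|z_i|^2\bigr)$ is a real polynomial with nonnegative coefficients; your sign bookkeeping is right ($(-1)^j(-1)^{2k-j}=1$ since the degree $2k$ is even), and the remaining ingredient — that a product of polynomials with nonnegative coefficients has nonnegative coefficients — is immediate. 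The paper instead argues by induction on $k$: writing $\mathcal{Z}=\mathcal{Z}'\cup\{z,\bar z\}$ it expands $f(\mathcal{Z},j)=f(\mathcal{Z}',j)+2\,\mathrm{Re}(z)\,f(\mathcal{Z}',j-1)+|z|^2 f(\mathcal{Z}',j-2)$ and concludes from the base case $f(\{z,\bar z\},1)=2\,\mathrm{Re}(z)$, $f(\{z,\bar z\},2)=|z|^2$ — which is exactly the recursion you sketch as your ``essentially equivalent alternative,'' so that remark reproduces the paper's proof. The two are the same computation organized differently: the induction is the coefficient-by-coefficient unfolding of your product of quadratics. Your packaging buys a global, non-inductive statement in which realness and nonnegativity appear simultaneously and the only care needed is one sign flip; the paper's induction buys a self-contained argument working directly from the combinatorial definition of $f(\mathcal{Z},j)$, without invoking Vieta or generating polynomials. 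Either way, the lemma then feeds into~\eqref{anj} to give $a_{n-j}\geq 0$ under Assumption~\ref{as1}, as you note.
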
 
\begin{proof}
We prove the result by induction on $k$ ($|\mathcal{Z}|$ contains $2k$ elements).  

\noindent \textit{Case $k=1$.} For $\mathcal{Z} = \{ z, \bar{z} \}$, we have 
$f(\mathcal{Z},1) = z+\bar{z} = 2 \text{Re}(z)$, and 
$f(\mathcal{Z},2) = z\bar{z} = |z|^2$. 

\noindent \textit{Induction.} Let $\mathcal{Z} = \mathcal{Z}' \cup \{ z, \bar{z} \}$. 
We have 
\begin{align*}
f(\mathcal{Z},j) 
& = z f(\mathcal{Z}',j-1) + \bar{z} f(\mathcal{Z}',j-1) 
+ z \bar{z} f(\mathcal{Z}',j-2) + f(\mathcal{Z}',j)  \\ 
& =  2 \text{Re}(z)f(\mathcal{Z}',j-1) + |z|^2 f(\mathcal{Z}',j-2)  + f(\mathcal{Z}',j). 
\end{align*}
where $\mathcal{Z}'$ contains $2k-2$ elements. 
\end{proof} 

Moreover, $p(x) > 0$ and $q(x) >0$ for all $x > 0$ since $p$ and $q$ have at least one positive coefficient since $f$ has at least one root with positive real part.  
For $1 \leq k \leq m$, let us define $a_{(n-j,k)}$ as follows 
\begin{equation} \label{anjk} 
a_{(n-j,k)} :=
\left\{ \begin{array}{cc}
\sum_{J \subset I, |J|=j, k \notin J} \; \prod_{i \in J} r_i 
& 1 \leq j \leq n,  \\
1 & j = 0,   \\
0 & \textrm{ otherwise.}   
\end{array} \right.
\end{equation} 
We have that $a_{(n-j,k)}$ is the sum of the same terms as $a_{n-j}$ in~\eqref{anj} except the ones where the $k$th root of $f$ appears. This implies that 
\begin{equation} \label{anjk2}
a_{(n-j,k)} =  a_{n-j} - r_k a_{(n-(j-1),k)}.  
\end{equation} 
In fact, $a_{(n-(j-1),k)}$ is the sum of all the products of $j-1$ roots of $f$ except for $r_k$. 
Note that $a_{(n-j,k)} \geq 0$ for all $j,k$ for a polynomial $f$ satisfying Assumption~\ref{as1} (for the same reasons as for $f$, since we only allow $r_k$ to be a real root with $1\leq k \leq m$).   
For all $j$, $k$, let us show that 
$a_{n-(j+1)} - r_k a_{n-j} =  a_{(n-(j+1),k)} - r_k^2 a_{(n-(j-1),k)}$.  
Using~\eqref{anjk2}, we obtain 
\begin{align}
a_{n-(j+1)} - r_k a_{n-j} 
& =  \left( a_{(n-(j+1),k)} + r_k a_{(n-j,k)} \right)  - r_k \left( a_{(n-j,k)} + r_k a_{(n-(j-1),k)}  \right) \nonumber \\ 
& = a_{(n-(j+1),k)} - r_k^2 a_{(n-(j-1),k)}. \label{lem1}  
\end{align}

\section{Main result} \label{mainres}

We can now prove our main result. 
\begin{theorem} \label{MainTh}
Let $f(x)$ be a univariate polynomial of degree $n$ defined as in~\eqref{unipoly} and satisfying Assumption \ref{as1}, and let $p(x)$ and $q(x)$ be defined as in~\eqref{pxnx}. 
Let also $x_0 \in \mathbb{R}$ with $0 < x_0 \in [r_k,r_{k+1}]$ for some $k \in \{0,1,\dots,m\}$. 
Then, 
\begin{displaymath}
x_1 = x_0 \frac{p(x_0)}{q(x_0)} \in [r_k,r_{k+1}] \quad \textrm{ and } \quad x_{-1} = x_0 \frac{q(x_0)}{p(x_0)} \in [r_k,r_{k+1}].  
\end{displaymath}
\end{theorem}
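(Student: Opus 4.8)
The plan is to show the two-sided containment by exploiting the factorization $f(x) = \prod_{i\in I}(x-r_i)$ together with the decomposition $f = p - q$. Since $x_0 \in [r_k, r_{k+1}]$, the key sign fact is that $f(x_0) = \prod_{i\in I}(x_0 - r_i)$ has a definite sign governed by how many real roots lie below $x_0$ (the complex factors contribute nonnegative quantities by the Lemma, suitably paired). Concretely, I would first record that $f(x_0) \le 0$ iff $p(x_0) \le q(x_0)$, and that on $[r_k, r_{k+1}]$ the sign of $f$ is constant; I will treat the case $p(x_0) \ge q(x_0)$ (so $f(x_0)\ge 0$, $x_1 \ge x_0 \ge x_{-1}$) and the mirror case is symmetric. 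It then suffices to prove $x_1 \le r_{k+1}$ and $x_{-1} \ge r_k$ (the other two inequalities $x_1 \ge x_0 \ge r_k$ and $x_{-1} \le x_0 \le r_{k+1}$ are immediate from $p(x_0)\ge q(x_0)$).

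For the bound $x_1 \le r_{k+1}$: the inequality $x_0\, p(x_0)/q(x_0) \le r_{k+1}$ is equivalent, after clearing the positive denominator $q(x_0)$, to $x_0\, p(x_0) - r_{k+1}\, q(x_0) \le 0$. I would expand $x\,p(x) - r\,q(x)$ in the monomial basis: using that $p$ and $q$ sum over the same index set (the remark after \eqref{pxnx}), the coefficient of each power $x^{n-j}$ in $x\,p(x) - r_{k+1}\,q(x)$ is of the form $a_{n-(j+1)} - r_{k+1}\, a_{n-j}$ (up to the obvious parity bookkeeping), which by the identity \eqref{lem1} equals $a_{(n-(j+1),k+1)} - r_{k+1}^2\, a_{(n-(j-1),k+1)}$. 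The point is that this regroups $x_0\,p(x_0) - r_{k+1}\,q(x_0)$ into a combination of the "$k{+}1$ deleted" coefficients $a_{(\cdot,k+1)} \ge 0$, with alternating powers, in such a way that it factors through $(x_0 - r_{k+1}) \le 0$ times a polynomial in $x_0$ with nonnegative value on $x_0 > 0$. The analogous manipulation with $r_k$ in place of $r_{k+1}$, applied to $x_0\, q(x_0) - r_k\, p(x_0)$, gives $x_{-1} \ge r_k$, now using $(x_0 - r_k) \ge 0$.

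The main obstacle is exactly this regrouping: making precise how $x\,p(x)$ and $q(x)$ (which carry opposite parity of exponents relative to $f$) combine so that the telescoping via \eqref{anjk2} and \eqref{lem1} produces an expression manifestly of one sign for $x_0 > 0$. I expect this to reduce to writing $x_0\,p(x_0) - r_{k+1}\,q(x_0)$ (resp.\ $x_0\,q(x_0) - r_k\,p(x_0)$) as $(x_0 - r_{k+1})$ (resp.\ $(x_0 - r_k)$) times $g_{k+1}(x_0)$ (resp.\ $g_k(x_0)$) where $g_\ell$ is built from the $a_{(n-j,\ell)}$'s and is nonnegative on the positive axis — essentially $g_\ell$ plays the role of "$f$ with the root $r_\ell$ removed," evaluated through its own $p$/$q$ split, so that $g_\ell(x_0) > 0$ for $x_0 > 0$ by the same positivity argument already used for $p$ and $q$. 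Once that factorization is in hand, both bounds follow by reading off signs, and combining the four inequalities gives $r_k \le x_{-1} < x_0 < x_1 \le r_{k+1}$ (with equalities only at roots), completing the proof.
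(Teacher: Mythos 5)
Your skeleton is the paper's: constancy of the sign of $p-q$ on $[r_k,r_{k+1}]$, reduction of the case $p(x_0)\ge q(x_0)$ to the two nontrivial bounds $x_0\,q(x_0)-r_k\,p(x_0)\ge 0$ and $r_{k+1}\,q(x_0)-x_0\,p(x_0)\ge 0$ (the other two inequalities being immediate), and the use of the deleted coefficients $a_{(n-j,\ell)}\ge 0$ through the identity \eqref{lem1}. Where you stop, however, is exactly where the paper does its work: you call the regrouping the ``main obstacle'' and say you \emph{expect} $x_0\,p(x_0)-r_{k+1}\,q(x_0)$ and $x_0\,q(x_0)-r_k\,p(x_0)$ to factor as $(x_0-r_{k+1})$, resp.\ $(x_0-r_k)$, times a quantity nonnegative for $x_0>0$, but you do not establish this. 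As written the proposal is an outline whose central step is asserted rather than proved; the paper closes that step differently, not by factoring but by rewriting each coefficient as $a_{n-(j+1)}-r\,a_{n-j}=a_{(n-(j+1),\ell)}-r^2 a_{(n-(j-1),\ell)}$, replacing $r^2$ by $x_0^2$ (legitimate since $r_k\le x_0\le r_{k+1}$ and the $a_{(\cdot,\ell)}$ are nonnegative), and letting the resulting sum telescope to a nonnegative quantity.

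The good news is that the factorization you conjecture is true, and arguably tidier than the telescoping. By the parity structure of \eqref{pxnx} one has $p(x)-q(x)=f(x)$ and $p(x)+q(x)=(-1)^n f(-x)$; so for a real root $r$ of $f$, writing $f(x)=(x-r)h(x)$,
\[
x\,p(x)-r\,q(x)=\tfrac12\big[(x+r)f(x)+(x-r)(-1)^n f(-x)\big]=(x-r)(x+r)\,p_h(x),
\]
and likewise $x\,q(x)-r\,p(x)=(x-r)(x+r)\,q_h(x)$, where $p_h,q_h$ are the two parity parts of $h$ defined as in \eqref{pxnx}; their coefficients are exactly the deleted quantities $a_{(n-j,\ell)}\ge 0$ of \eqref{anjk}, so $p_h(x_0)\ge 0$ and $q_h(x_0)\ge 0$ for $x_0>0$, which is your ``same positivity argument as for $p$ and $q$.'' Taking $r=r_{k+1}$ (trivial when $r_{k+1}=+\infty$) and $r=r_k$ (trivial when $k=0$) then yields the two bounds by reading off signs, exactly as you intended. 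So: essentially the paper's approach up to the last step, with a genuinely cleaner finishing identity in prospect, but you must actually carry out this verification (or the paper's telescoping via \eqref{anjk2}) for the argument to be complete.
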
 
\begin{proof}
Since $p(x)$ and $q(x)$ only intersect at the roots of $f$, we have 
\begin{eqnarray}
p(x_0) \geq q(x_0) & \Rightarrow & p(x) \geq q(x) \; \text{ for } \; x \in [r_k,r_{k+1}], \label{pn1}   
\end{eqnarray}
and similarly for $p(x_0) \leq q(x_0)$. 
Let us focus on the case $p(x_0) \geq q(x_0)$. The case  $p(x_0) \leq q(x_0)$ can be treated in a similar way. 
Clearly, by \eqref{pn1}, $x_0 \frac{p(x_0)}{q(x_0)} \geq x_0 \geq  r_k$ and $x_0 \frac{q(x_0)}{p(x_0)} \leq x_0 \leq r_{k+1}$. It remains to show that 
(i)~$x_0 \frac{q(x_0)}{p(x_0)}  \geq  r_{k}$, 
and (ii)~$x_0 \frac{p(x_0)}{q(x_0)}  \leq  r_{k+1}$. 
Let us start with (i). We have 
\begin{eqnarray}
%
%
%
x_0 \frac{q(x_0)}{p(x_0)}  \geq  r_{k} & \iff & x_0 q(x_0) - r_k p(x_0) \geq 0 \nonumber \\
& \iff & \left[\sum_{i=0}^{\left\lceil (n-1)/2 \right\rceil} a_{n-(2i+1)} \, x_0^{n-2i} \right] 
-  \left[\sum_{i=0}^{\left\lceil (n-1)/2 \right\rceil} r_k a_{n-2i} \, x_0^{n-2i}\right] \geq 0 \nonumber  \\
& \iff & \alpha := 
\sum_{i=0}^{\left\lceil (n-1)/2 \right\rceil} (a_{n-(2i+1)}-r_k a_{n-2i})  \, x_0^{n-2i} \geq 0. \label{alpha1}
\end{eqnarray} 
Therefore, it remains to prove that $\alpha$ is nonnegative. 
Using~\eqref{lem1}, the fact that $x_0 \geq r_k \geq 0$ and $a_{n-j,k} \geq 0$ for all $j,k$, we obtain  
\begin{equation}
a_{n-(2i+1)}-r_k a_{n-2i}  =  a_{(n-(2i+1),k)} - r_k^2 a_{(n-(2i-1),k)} \geq  a_{(n-(2i+1),k)} - x_0^{2} a_{(n-(2i-1),k)}. \label{lowalpha}
\end{equation}
Replacing the expression in brackets in~\eqref{alpha1} by the right-hand side of~\eqref{lowalpha}, we get a lower bound for $\alpha$:
\begin{align*}
\alpha 
 & \geq 
 \sum_{i=0}^{\left\lceil (n-1)/2 \right\rceil} \Big( a_{(n-(2i+1),k)} - x_0^2 a_{(n-(2i-1),k)} \Big)  \, x_0^{n-2i} \\ 
& =  
 \sum_{i=0}^{\left\lceil (n-1)/2 \right\rceil}  a_{(n-(2i+1),k)} \, x_0^{n-2i} 
- \sum_{i=1}^{\left\lceil (n-1)/2 \right\rceil}  a_{(n-(2i-1),k)}  \, x_0^{n-2(i-1)} \\
 & =  \sum_{i=0}^{\left\lceil (n-1)/2 \right\rceil}  a_{(n-(2i+1),k)} \, x_0^{n-2i} 
- \sum_{j=0}^{\left\lceil (n-1)/2 \right\rceil - 1 }  a_{(n-(2j+1),k)}  \, x_0^{n-2j} \\
 & =  \sum_{i=0}^{\left\lceil (n-1)/2 \right\rceil - 1} \underbrace{\Big( a_{(n-(2i+1),k)} - a_{(n-(2i+1),k)} \Big)}_{=0} 
\, x_0^{n-2i} + \gamma_n x_0 \geq 0 ,  
\end{align*} 
where 
\[
\gamma_n = 
\left\{ 
\begin{array}{cc} 
0 & \text{ if $n$ is even since $a_{(n-(2i+1),k)} = a_{(-1,k)} = 0$ for $i=\lceil (n-1)/2\rceil=n/2$},  \\  
1 & \text{ if $n$ is odd since $a_{(n-(2i+1),k)} = a_{(0,k)} = 1$ for $i=\lceil (n-1)/2\rceil=(n-1)/2$}. 
\end{array}  
\right.  
\]
The first equality follows from the fact that $a_{(n+1,k)}=0$ by definition~\eqref{anjk}, 
the second simply by setting $j=i-1$, 
and the third by putting back the terms together.  

Let us now focus on (ii). The proof is rather similar to (i) 
but we provide it here for completeness. We have 
$x_0 \frac{p(x_0)}{q(x_0)}  \leq  r_{k+1}  \iff  r_{k+1} q(x_0) - x_0 p(x_0) \geq 0$ 
\begin{eqnarray}
& \iff & \left[\sum_{i=0}^{\left\lceil (n-1)/2 \right\rceil} r_{k+1} a_{n-(2i+1)} \, x_0^{n-(2i+1)} \right] 
-  \left[\sum_{i=0}^{\left\lceil (n-1)/2 \right\rceil} a_{n-2i} \, x_0^{n-(2i-1)}\right] \geq 0 \nonumber \\
& \iff & \left[\sum_{j=i+1=1}^{\left\lceil (n-1)/2 \right\rceil + 1} r_{k+1} a_{n-(2j-1)} \, x_0^{n-(2j-1)} \right] 
-  \left[\sum_{i=0}^{\left\lceil (n-1)/2 \right\rceil} a_{n-2i} \, x_0^{n-(2i-1)}\right] \geq 0 \nonumber  \\
& \iff & \beta := \sum_{i=0}^{\left\lceil (n-1)/2 \right\rceil+1} (r_{k+1} a_{n-(2i-1)}-a_{n-2i})  \, x_0^{n-(2i-1)} \geq 0, \label{beta}
\end{eqnarray} 
where $a_{n-(2i-1)} = a_{n+1} = 0$ for $i = 0$, and $a_{n-2i} = 0$ for $i = \left\lceil (n-1)/2 \right\rceil+1$.   
Using~\eqref{lem1} multiplied by $-1$, and since $0 \leq x_0 \leq r_{k+1}$ and $a_{n-j}^k \geq 0$ for all $j$, we obtain 
\begin{equation}
r_{k+1} a_{n-(2i-1)} - a_{n-2i} = r_{k+1}^2 a_{(n-(2i-2),k+1)} - a_{(n-2i,k+1)}  \geq  x_0^2 a_{(n-(2i-2),k+1)} - a_{(n-2i,k+1)}. \label{betalow}
\end{equation}
Similarly as for (i), injecting~\eqref{beta} in~\eqref{betalow}, we can lower bound $\beta$ as follows  
\begin{eqnarray*}
\beta 
& \geq & 
\sum_{i=0}^{\left\lceil (n-1)/2  \right\rceil + 1} 
\Big(x_0^2 a_{(n-(2i-2),k+1)} - a_{(n-2i,k+1)}\Big)  \, x_0^{n-(2i-1)} \\
& = & 
\sum_{i=1}^{\left\lceil (n-1)/2  \right\rceil + 1}  a_{(n-(2i-2),k+1)} x_0^{n-(2i-3)}
- \sum_{i=0}^{\left\lceil (n-1)/2  \right\rceil} a_{(n-2i,k+1)} x_0^{n-(2i-1)}   \\
& = & \sum_{j=i-1=0}^{\left\lceil (n-1)/2  \right\rceil}  a_{(n-2j,k+1)} x_0^{n-(2j-1)}
- \sum_{i=0}^{\left\lceil (n-1)/2  \right\rceil} a_{(n-2i,k+1)} x_0^{n-(2i-1)}   \\ 
& = &  \sum_{i=0}^{\left\lceil (n-1)/2  \right\rceil} 
\Big( a_{(n-2i,k+1)} - a_{(n-2i,k+1)} \Big) x_0^{n-(2i-1)}  = 0.    
\end{eqnarray*} 
\end{proof} 

Theorem~\ref{MainTh} allows us to construct two sequences converging monotically to $r_k$ and $r_{k+1}$, given an initial point 
$r_k < x_0 < r_{k+1}$; see Algorithm~\ref{muroot}.

 \algsetup{indent=2em}
\begin{algorithm}[ht!]
\caption{Multiplicative updates for polynomial root finding} \label{muroot}
\begin{algorithmic}[1]
\REQUIRE The polynomial $f(x) = p(x)-q(x)$ where $p$ and $q$ have nonnegative coefficients, 
an initial point $x_0 > 0$ with $x_0 \in [r_k, r_{k+1}]$ 
where $r_k$ is the $k$th nonnegative real root of $f$ (where $r_0 = 0$ and $r_{m+1} = +\infty$). 

\ENSURE 
If $f$ satisfies Assumption~\ref{as1}, the sequence $x_t$ (resp.\@ $x_{-t}$) 
converges to $r_{k+1}$ (resp.\@ $r_k$) if $p(x_0) > q(x_0)$, to $r_{k}$ (resp.\@ $r_{k+1}$) otherwise.   \medskip

\FOR{$t = 0, 1, 2, \dots$}

\STATE $x_{t+1} = x_{t} \frac{ p(x_t) }{ q(x_t) }$.

\STATE $x_{-(t+1)} = x_{-t} \frac{ q(x_{-t}) }{ p(x_{-t}) }$. 

\ENDFOR

\end{algorithmic}
\end{algorithm}

\begin{theorem} \label{convr}
Under the same assumptions as in Theorem \ref{MainTh}, and assuming without loss of generality that $p(x_0) > q(x_0)$ 
for $r_k < x_0 < r_{k+1}$, 
the sequences $\{x_t\}_{t\geq 1}$ and $\{x_t\}_{t\leq -1}$ generated by Algorithm~\ref{muroot} 
converge to $r_{k+1}$ and $r_{k}$, respectively.  
\end{theorem}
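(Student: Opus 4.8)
The plan is to combine the single-step invariance from Theorem~\ref{MainTh} with a monotonicity-plus-boundedness argument, and then identify the limit as a fixed point of the update. Consider the sequence $\{x_t\}_{t\geq 0}$. Since $p(x_0)>q(x_0)$ and $x_0\in(r_k,r_{k+1})$, equation~\eqref{pn1} gives $p(x)>q(x)$ for all $x\in(r_k,r_{k+1})$; in particular $x_1=x_0\,p(x_0)/q(x_0)>x_0$. By Theorem~\ref{MainTh}, $x_1\in[r_k,r_{k+1}]$, and since $x_1>x_0>r_k$ and $p,q$ agree only at roots of $f$, either $x_1=r_{k+1}$ (and we are done, the sequence being eventually constant) or $x_1\in(r_k,r_{k+1})$. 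In the latter case we may repeat the argument, so by induction $\{x_t\}$ is nondecreasing and bounded above by $r_{k+1}$; hence it converges to some limit $\ell\in(x_0,r_{k+1}]$. Similarly, using $p(x)>q(x)$ on $(r_k,r_{k+1})$ one checks that $x_{-1}=x_0\,q(x_0)/p(x_0)<x_0$, and Theorem~\ref{MainTh} keeps $x_{-1}\in[r_k,r_{k+1}]$, so $\{x_{-t}\}_{t\geq 0}$ is nonincreasing and bounded below by $r_k$, hence converges to some $\ell'\in[r_k,x_0)$.

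The second step is to show $\ell=r_{k+1}$ and $\ell'=r_k$. Passing to the limit in $x_{t+1}=x_t\,p(x_t)/q(x_t)$ and using continuity of $p$, $q$ together with $q>0$ on $(0,\infty)$ (noted after the lemma), we get $\ell=\ell\,p(\ell)/q(\ell)$, i.e.\ $\ell\,(p(\ell)-q(\ell))=0$. Since $\ell\geq x_0>0$, this forces $p(\ell)=q(\ell)$, so $\ell$ is a root of $f$. But $\ell\in(r_k,r_{k+1}]$ and the only root of $f$ in that half-open interval is $r_{k+1}$ (there is no real root strictly between consecutive real roots $r_k$ and $r_{k+1}$), hence $\ell=r_{k+1}$. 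The same reasoning applied to $x_{-(t+1)}=x_{-t}\,q(x_{-t})/p(x_{-t})$ shows $\ell'$ is a root of $f$ in $[r_k,x_0)\subset[r_k,r_{k+1})$, forcing $\ell'=r_k$.

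One edge case deserves a line of care: if $r_k=0$, then $\ell'\in[0,x_0)$ and the limiting equation reads $\ell'\,(p(\ell')-q(\ell'))=0$, which is satisfied by $\ell'=0=r_0$ as well, so the conclusion $\ell'=r_k$ still holds (indeed $0$ is a common value of $p$ and $q$ only in the degenerate sense that both vanish at $0$ when $a_0=0$; in any case the sequence is trapped in $[0,x_0)$ and decreasing, and $0$ is the relevant endpoint). I expect the main obstacle to be purely expository rather than mathematical: making precise why no root of $f$ lies strictly between $r_k$ and $r_{k+1}$ (immediate from the ordering $r_1\leq\dots\leq r_m$ of the real roots, with the conventions $r_0=0$, $r_{m+1}=+\infty$) and handling the boundary conventions $k=0$ and $k=m$ uniformly. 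Everything else is a routine monotone-convergence argument feeding off Theorem~\ref{MainTh}.
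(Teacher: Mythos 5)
Your argument is correct in substance but reaches the conclusion by a genuinely different route than the paper. You identify the limit via continuity: monotonicity and the interval-invariance of Theorem~\ref{MainTh} give convergence to some $\ell$, and passing to the limit in $x_{t+1}=x_t\,p(x_t)/q(x_t)$ (legitimate since $q>0$ on $(0,\infty)$ and $\ell\geq x_1>0$) shows $\ell$ is a fixed point, hence a root of $f$, hence the endpoint $r_{k+1}$ because $(r_k,r_{k+1})$ contains no real root; symmetrically for the decreasing sequence. The paper instead argues by contradiction with a quantitative bound: if the limit $s$ were strictly inside $(r_k,r_{k+1})$, then $L=\min_{x\in[x_0,s]}p(x)/q(x)>1$ by compactness, so $x_t\geq x_0L^t\to\infty$, contradicting $x_t\leq s<\infty$. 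Your route is the more standard fixed-point identification and avoids any compactness/minimum argument; the paper's route has the advantage that it treats the case $k=m$ (where $r_{k+1}=+\infty$) with no extra work, since the contradiction only needs $s<+\infty$, and it also hints at the geometric behaviour quantified later in Theorem~\ref{thconv}.

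That case $k=m$ is the one genuine gap in your write-up: your sentence ``nondecreasing and bounded above by $r_{k+1}$; hence it converges'' is vacuous when $r_{k+1}=+\infty$, and the continuity argument cannot be applied to an unbounded sequence. You flag the boundary conventions as expository, but here a (short) mathematical step is missing: if $k=m$ and $\{x_t\}$ were bounded, your own fixed-point argument would produce a finite real root strictly larger than $r_m$, a contradiction, so the sequence must be unbounded and, being increasing, diverges to $+\infty=r_{m+1}$, which is the claimed limit under the paper's convention. With that half-line added (and your $r_k=0$ discussion, which is essentially fine since for $\ell'>0$ one has $p(\ell')>0$ and for $\ell'=0$ the conclusion $\ell'=r_0$ is immediate), the proof is complete.
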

\begin{proof} Let us focus on the sequence $\{x_t\}_{t\geq 1}$; the same proof holds for $\{x_t\}_{t\leq -1}$. 
We have $p(x_0) > q(x_0)$ since $x_0$ is not a root of $f$ by assumption. 
By Theorem~\ref{MainTh},    
\[ 
x_0 < x_1 < x_2 < \dots \leq r_{k+1}. 
\]  
Therefore, $\{x_t\}_{t\geq 1}$ must converge to a limit point $s$ (possibly $+\infty$ if $k=m$). 
Suppose $s < r_{k+1}$. For any $x \in  [x_0,s]$, we have 
\[
 x \frac{p(x)}{q(x)} \geq x L, 
\; \textrm{ with } \;   L = \min_{x \in [x_0,s]} \frac{p(x)}{q(x)} > 1, 
\]
since $p(x) > q(x)$ for all $x \in [x_0, s] \subset ]r_k,r_{k+1}[$. 
By construction, we therefore have $x_0 L^t \leq x_t \leq s < +\infty$ for all $t \geq 1$ which is a contradiction since $L > 1$.  
\end{proof}


\begin{remark}
For Theorems~\ref{MainTh} and \ref{convr} to hold, 
the decomposition $f(x) = p(x)-q(x)$ can be chosen differently as in~\eqref{pxnx} as long as $p$ and $q$ have nonnegative coefficients. In fact, for any polynomial $d(x)$ with nonnegative coefficients, we can use the decomposition $f(x) = \big(p(x)+d(x)\big) - \big(q(x)+d(x)\big)$ which will simply make Algorithm converge slower since $\frac{p(x)+d(x)}{q(x)+d(x)}$ will be closer to 1 than $\frac{p(x)}{q(x)}$. 
\end{remark}

The simplest case for which Theorems~\ref{MainTh} and \ref{convr} apply is when $f(x) = x - b$ for $b > 0$. For $x_0 < b$ (resp.\@ $x_0 > b$), 
the updates are given by 
\[
x_1 = x_0 \frac{x_0}{b} 
\quad \text{ and } \quad 
x_{-1} = x_0 \frac{b}{x_0} = b, 
\]
so that $x_{-1}$ converges in one step to the root of $f$ while $\{x_t\}_{t \geq 0} = x_0 \left( \frac{x_0}{b} \right)^{2^t - 1}$ 
converges to zero (resp.\@ infinity) quadratically.  

For higher degree polynomials, the convergence is linear, as shown in the theorem below.

\begin{theorem} \label{thconv}
If $f$ satisfies Assumption~\ref{as1}, Algorithm~\ref{muroot} asymptotically converges linearly to simple roots of $f$. 
\end{theorem}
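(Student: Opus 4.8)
The plan is to read Algorithm~\ref{muroot} as a fixed-point iteration $x_{t+1} = m(x_t)$, where $m$ is one of the two rational maps $g(x) = x\,p(x)/q(x)$ (the forward update) and $h(x) = x\,q(x)/p(x)$ (the backward update), and to reduce the statement to the derivative test $|m'(r)| < 1$ at the simple root $r$ to which the sequence converges. By Theorems~\ref{MainTh} and~\ref{convr} the relevant sequence is monotone and converges to $r$; we may assume $r>0$ (if $r=0$ happens to be a root the situation is degenerate and requires a separate direct check), so $p(r) = q(r) > 0$ and $m$ is $C^1$ in a neighbourhood of $r$. Then the mean value theorem gives $x_{t+1}-r = m'(\xi_t)(x_t-r)$ with $\xi_t$ between $x_t$ and $r$, and since $x_t\to r$ we obtain $|x_{t+1}-r|/|x_t-r| \to |m'(r)|$; hence, once $|m'(r)|<1$ is established, the iteration is asymptotically linear with rate $|m'(r)|$ (and superlinear in the degenerate case $m'(r)=0$, which occurs e.g. for $f(x)=x-b$). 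Differentiating and using $p(r)=q(r)$ yields $g'(r) = 1 + r f'(r)/q(r)$ and $h'(r) = 1 - r f'(r)/q(r)$, so with $\delta := r f'(r)/q(r)$ it remains to show that $\delta$ has the right sign and that $|\delta| < 2$.

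For the sign, assume as in Theorem~\ref{convr} that $p(x_0)>q(x_0)$, so that $f>0$ throughout $(r_k,r_{k+1})$. The forward sequence uses the $g$-map and increases to $r_{k+1}$ from the left, where $f$ is positive; since $r_{k+1}$ is a simple root, $f$ changes sign there, so $f'(r_{k+1})<0$ and $\delta<0$. Symmetrically, the backward sequence uses the $h$-map and decreases to $r_k$ from the right, where $f$ is positive, so simplicity of $r_k$ forces $f'(r_k)>0$ and $\delta>0$; the case $p(x_0)<q(x_0)$ is the same after exchanging the two sequences. Combined with the bound of the next paragraph, this gives $\delta\in(-2,0)$ for the $g$-map, hence $|g'(r)| = |1+\delta|<1$, and $\delta\in(0,2)$ for the $h$-map, hence $|h'(r)| = |1-\delta|<1$.

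The crux is the bound $|\delta|<2$, i.e.\ $r\,|f'(r)| < 2\,q(r) = p(r)+q(r)$. I would prove it from two factorizations. From $f(x)=\prod_{i\in I}(x-r_i)$ and $r=r_k$ simple we get $f'(r) = \prod_{i\in I,\,i\neq k}(r-r_i)$. Since the coefficients of $f$ alternate in sign, $p(x)+q(x) = \sum_{j=0}^{n} a_{n-j}x^{n-j} = (-1)^n f(-x) = \prod_{i\in I}(x+r_i)$; evaluated at $x=r=r_k$ the factor $i=k$ contributes $r+r_k = 2r$, so $p(r)+q(r) = 2r\prod_{i\neq k}(r+r_i)$. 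The claim therefore reduces to $\bigl|\prod_{i\neq k}(r-r_i)\bigr| < 2\prod_{i\neq k}(r+r_i)$. For a real root $r_i\ge 0$ with $i\neq k$ one has $|r-r_i|\le r+r_i$, and for a conjugate pair $\{z,\bar z\}$ with $\text{Re}(z)\ge 0$ one has $|r-z|^2 = r^2-2r\,\text{Re}(z)+|z|^2 \le r^2+2r\,\text{Re}(z)+|z|^2 = |r+z|^2$; multiplying these over $I\setminus\{k\}$ gives $\bigl|\prod_{i\neq k}(r-r_i)\bigr|\le\prod_{i\neq k}(r+r_i)$, and since the right-hand side is strictly positive the factor $2$ makes the inequality strict. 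The step I expect to be most delicate is not this estimate but the bookkeeping of the sign of $\delta$: one has to match, in each sub-case (four in general, two after the reduction of Theorem~\ref{convr}), which of the two maps converges to which root from which side, and read off the sign of $f'$ there from simplicity of the root.
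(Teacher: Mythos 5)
Your proposal is correct, and while it shares the paper's skeleton---view the update as a one-point iteration, apply the mean value theorem, and reduce everything to showing that the derivative of the iteration map at the simple root lies in $(-1,1)$, with the same derivative formula $m'(\alpha)=1\pm\alpha f'(\alpha)/q(\alpha)$ as in~\eqref{rateconv}---it certifies that bound by a genuinely different argument. The paper obtains the upper bound $F'(\alpha)<1$ exactly as you do (the sign of $f'$ at the root, forced by simplicity and the side from which the monotone sequence approaches), but it rules out $F'(\alpha)\leq -1$ dynamically: since the errors $e_t=\alpha-x_t$ stay nonnegative by Theorem~\ref{MainTh}, the mean-value points satisfy $F'(\zeta_t)=e_{t+1}/e_t\geq 0$, whence $F'(\alpha)\geq 0$ by continuity. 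You instead prove the two-sided bound purely algebraically: writing $p(x)+q(x)=(-1)^nf(-x)=\prod_{i\in I}(x+r_i)$ and $f'(r_k)=\prod_{i\neq k}(r_k-r_i)$, the inequality $|r-r_i|\leq|r+r_i|$ for roots with nonnegative real part (handled correctly for conjugate pairs) gives $r\,|f'(r)|\leq\tfrac12\bigl(p(r)+q(r)\bigr)=q(r)$, i.e.\ $|\delta|\leq 1$, which is even stronger than the $|\delta|<2$ you need and yields $m'(r)\in[0,1)$ directly from Assumption~\ref{as1}, without appealing to the monotonicity of the iterates for that part. This buys an explicit, self-contained two-sided estimate on the asymptotic rate (and cleanly isolates where Assumption~\ref{as1} enters), at the cost of slightly more bookkeeping; your sign analysis for the two maps is the same case distinction the paper dispatches with ``the other cases can be treated in a similar way.'' Two minor remarks: your reduction quietly assumes $\alpha>0$ so that $q(\alpha)>0$ (you flag the root-at-zero case as needing a separate check; the paper makes the same implicit restriction), and your observation that $m'(r)=0$ gives superlinear convergence matches the paper's $f(x)=x-b$ discussion and is consistent with the theorem read as ``at least linear.''
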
 
\begin{proof}
Let us focus on the one-point iteration $F(x) = x \frac{p(x)}{q(x)}$ where 
\begin{itemize}
\item the initial point $x_0$ is smaller but sufficiently close to the simple root $\alpha$, that is, $\alpha - \delta < x_0 < \alpha$ for some $\delta > 0$,  
\item $p(x_0) > q(x_0) \iff f(x_0) > 0$. 
\end{itemize} 
The other cases can be treated in a similar way. 

Since $\alpha$ is a simple root of $f$, 
we have $f'(\alpha) \neq 0$ hence $p'(\alpha) < q'(\alpha)$ for $\delta$ sufficiently small, 
since $p(x_0) > q(x_0)$ and $p(\alpha) = q(\alpha)$.  
By Lagrange mean value theorem, we have 
\begin{equation} \label{xtp1}
x_{t+1} = F(x_t) = \alpha + (x_t-\alpha) F'(\zeta) \quad \text{ for some } \zeta \in [x_t,\alpha]. 
\end{equation}
By Theorem~\ref{MainTh}, $x_0 < x_1 < \dots \leq \alpha$, 
so that the error $e_t$ at the $t$th step of Algorithm~\ref{muroot} satisfies $e_{t} = \alpha - x_t \geq 0$. 
Injecting $e_t$ in~\eqref{xtp1}, we obtain $e_{t+1} = F'(\zeta) e_t$. 
If we show that $0 \leq \ell = \min_{\alpha - \delta < \zeta < \alpha} F'(\zeta) < 1$ for $\delta$ sufficiently small, the proof is complete since this implies a linear convergence rate of ratio $\ell < 1$.   
First, $e_t \geq 0$ for all $t$ implies that $F'(\zeta) \geq 0$. 
Second, recall that since $q$ is a polynomial with nonnegative coefficients and at least one positive coefficient (by Assumption~\ref{as1}),  
$q(x) > 0$ for all $x > 0$ hence $F(x)$ is differentiable for all $x > 0$. 
We compute 
\begin{align}
F'(\alpha) 
& = \frac{p(\alpha)}{q(\alpha)} 
+ \alpha 
\frac{p'(\alpha)q(\alpha)-q'(\alpha)p(\alpha)}{q^2(\alpha)} =  1 - \alpha  \frac{q'(\alpha)-p'(\alpha)}{q(\alpha)},  \label{rateconv}
\end{align}
since $p(\alpha) = q(\alpha)$. 
We have $F'(\alpha) < 1$ since $\alpha > 0$, $q'(\alpha) > p'(\alpha)$ and $q(\alpha) > 0$. 
\end{proof}

Note that the convergence cannot be in general faster than linear since $F(x)$ has order one, where the order $p$ of a one-point iteration $F$ is defined as~\cite[p.344, Theorem~8.1]{Ralts} 
\[
F(\alpha) = \alpha; \quad  F^{(j)}(\alpha) = 0, 0 \leq j < p; \quad F^{(p)}(\alpha) \neq 0, 
\] 
with $F^{(j)}$ the $j$th derivative of $F$. The multiplicative updates have order one at simple roots of $f$.

\section{Numerical Example}  \label{examp}

Let us consider 
\[ 
f(x)  = (x-1)(x-2)(x-3)(x-1+i)(x-1-i) = x^5-8x^4+25x^3-40x^2+34x-12, 
\] 
with $p(x)  =  x^5+25x^3+34x$ 
and $q(x) = 8x^4+40x^2+12$, 
for which $r_0 = 0$, $r_1 = 1$, $r_2 = 2$, $r_3 = 3$, $r_4 = +\infty$, $r_5 = 1+i$, $r_6 = 1-i$. 
Figure~\ref{functex} displays the polynomial (on the left) along with the evolution of the iterates generated by Algorithm~\ref{muroot} using $x_0 = 2.5$ (on the right). 
\begin{figure}[h]
\begin{center}
\includegraphics[width=0.8\textwidth]{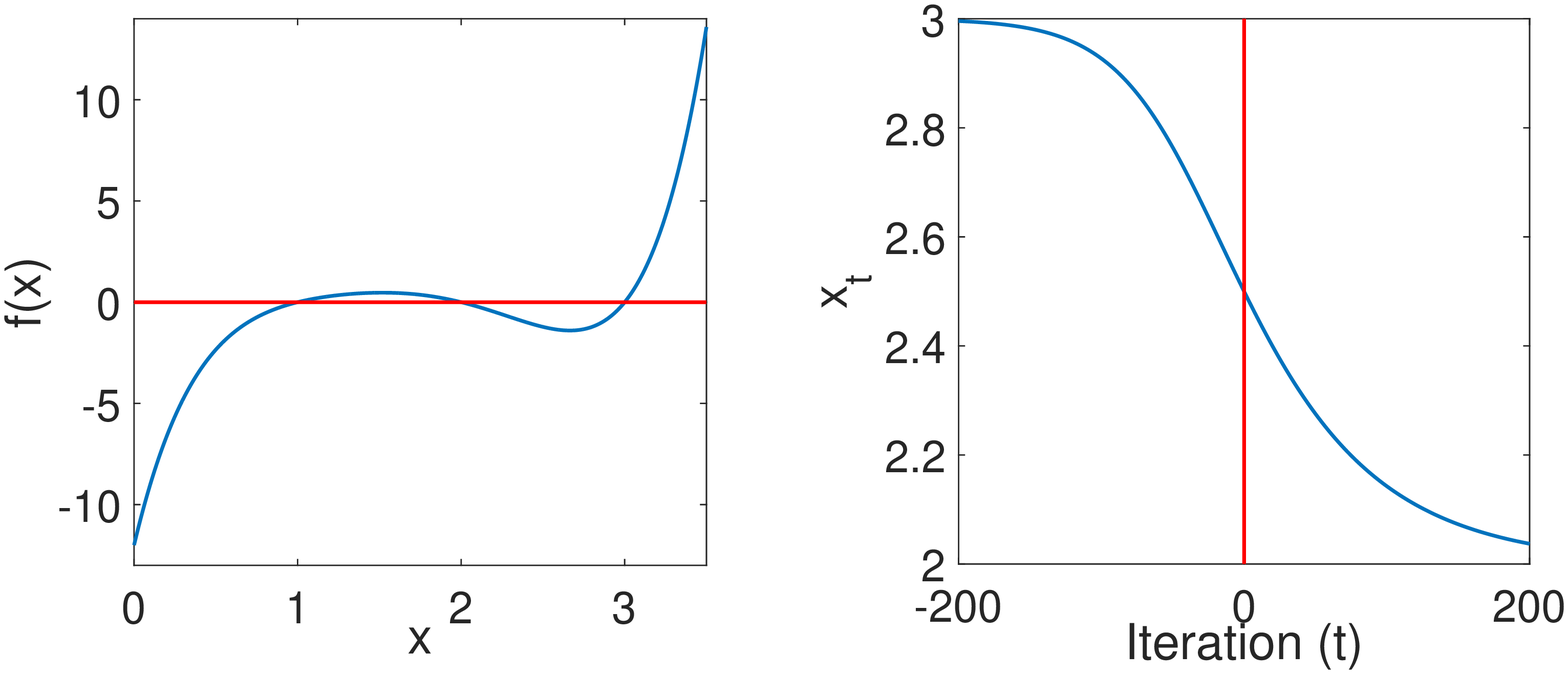}
\caption{(Left) Polynomial $f(x)$. (Right) Evolution of the iterates under the multiplicative updates.} 
\label{functex}
\end{center}
\end{figure}
As proved in Theorems~\ref{MainTh} and~\ref{convr}, the iterates remain in the interval $[2,3]$ and converge to the bounds of this interval. 
In this example, $p(x_0) < q(x_0)$ hence the sequence $\{x_t\}_{t \geq 0}$ generated by Algorithm~\ref{muroot} converges to 2 and $\{x_t\}_{t \leq 0}$ to 3. 

Figure~\ref{linconv} illustrates the linear convergence of the updates as proved in Theorem~\ref{thconv}. 
\begin{figure}[h]
\begin{center}
\includegraphics[width=0.4\textwidth]{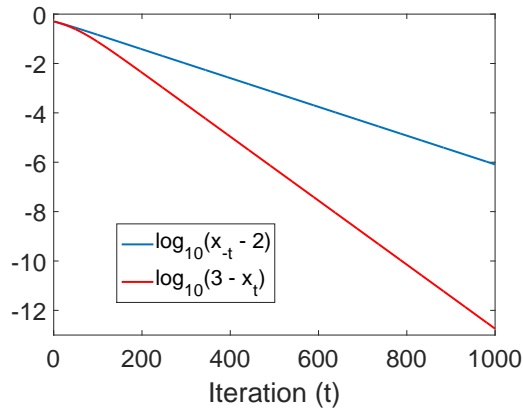}
\caption{Evolution of the logarithm of the error of the iterates under the multiplicative updates.} 
\label{linconv}
\end{center}
\end{figure}  
For the root $r_3=3$, the asymptotic rate of convergence from~\eqref{rateconv} is given by 
\[ 
F'(3) = 1 - \alpha  \frac{q'(\alpha)-p'(\alpha)}{q(\alpha)} 
= 
1 - 3  \frac{q'(3)-p'(3)}{q(3)} 
= 0.9706, 
\]
so that Algorithm~\ref{muroot} (asymptotically) requires about 77 iterations ($F'(3)^{77} \approx 0.1$) to gain one digit of accuracy. 
For the root $r_2 = 2$, we obtain 
\[
1 - \alpha  \frac{p'(\alpha)-q'(\alpha)}{q(\alpha)} 
= 
1 - 2  \frac{p'(2)-q'(2)}{q(2)} 
= 0.9867, 
\]
so that about 170 iterations are necessary to gain one digit of accuracy. 


\section{Discussion} 

In this paper, we analyzed simple multiplicative updates to find the nonnegative real roots of a polynomial. 
We proved a rather surprising fact that under the assumption that the roots of $f$ have nonnegative real parts (Assumption~\ref{as1}), 
the updates always remain in the same interval between two real roots and monotonically converge to these roots. 
These updates converge relatively slowly (linearly for simple roots). 
The main motivation to study these updates came from a vast body of literature using such updates for matrix factorization problems with nonnegativity constraints. However, it is unlikely for these updates to be competitive for polynomial root finding as it is a highly studied problem for which there exist more general and much more efficient methods. 
However, it would be an interesting direction for further research to analyze acceleration schemes, and use these schemes in practical applications from the nonnegative matrix factorization literature~\cite{cichocki2009nonnegative}.  
For example, we observed that shifting the polynomial $f$ can accelerate convergence significantly (as the ratio between $p$ and $q$ goes away from one).


\section*{Acknowledgements}  

We are grateful to the reviewer that helped us improve the paper significantly. 

The author acknowledges the support of the ERC (starting grant n$^\text{o}$ 679515) and of the F.R.S.-FNRS (incentive grant for scientific research n$^\text{o}$ F.4501.16).

\bibliographystyle{spmpsci}  

\bibliography{MUroot} 

\end{document}